\DeclareRobustCommand{\intprod}{%
  \mathbin{\mathpalette\int@prod{(0.1,0)(0.9,0)(0.9,0.8)}}%
}
\DeclareRobustCommand{\intprodr}{%
  \mathbin{\mathpalette\int@prod{(0.1,0.8)(0.1,0)(0.9,0)}}}
\newcommand{\int@prod}[2]{%
  \begingroup
  \sbox\z@{$\m@th#1+$}%
  \setlength\unitlength{\wd\z@}%
  \begin{picture}(1,1)
  \roundcap
  \polyline#2
  \end{picture}%
  \endgroup
}
\newcommand{\sumprime}{\if@display\sideset{}{'}\sum%
            \else\sum'\fi}
\begin{document}

\numberwithin{equation}{section}

\newtheorem{theorem}{Theorem}[section]
\newtheorem{prop}[theorem]{Proposition}
\newtheorem{conjecture}[theorem]{Conjecture}
\def\theconjecture{\unskip}
\newtheorem{corollary}[theorem]{Corollary}
\newtheorem{lemma}[theorem]{Lemma}
\newtheorem{observation}[theorem]{Observation}
\newtheorem{definition}{Definition}
\newtheorem*{definition*}{Definition}
\numberwithin{definition}{section} 
\newtheorem{remark}{Remark}
\newtheorem*{note}{Note}
\def\theremark{\unskip}
\newtheorem{kl}{Key Lemma}
\def\thekl{\unskip}
\newtheorem{question}{Question}
\def\thequestion{\unskip}
\newtheorem*{example}{Example}
\newtheorem{problem}{Problem}

\thanks{}

\title{Some characterizations of weakly uniformly perfect sets}

 \author[Zhiyuan Zheng]{Zhiyuan Zheng}
\date{2025. 07. 16}



\address[Zhiyuan Zheng]{Department of Mathematics and Computer Sciences, Tongling University, Anhui, 244000, China}

\email{2023052@tlu.edu.cn}

\begin{abstract}
In this paper, the concept of weakly uniform perfectness is considered. As an analogue of the theory of uniform perfectness, we establish the relationships between weakly uniform perfectness and Bergman kernel, Poincar\'e metric, local harmonic measure and Hausdorff content. In particular, for a bounded domain $\Omega \subset \mathbb{C}$, we show that the uniform perfectness of $\partial \Omega$ is equivalent to $K_{\Omega}(z) \gtrsim \delta_{\Omega}(z)^{-2}$, where $K_{\Omega}(z)$ is the Bergman kernel of $\Omega$ and $\delta_{\Omega}(z)$ denotes the boundary distance.

\bigskip
\noindent{{\sc Mathematics Subject Classification} (2020):30C40, 30C85,30F45}

\smallskip
\noindent{{\sc Keywords}: weakly uniform perfectness, Bergman kernel, Poincar\'e metric, Hausdorff content}

\end{abstract}

\maketitle

\section{Introduction}

The concept of uniform perfectness is introduced by Pommerenke in 1979 \cite{Pommerenke}, which found connections with various problems in complex analysis, geometry, dynamic system and other fields (see, e.g.,  \cite{Ancona,Fernandez,Gonzalez,Hinkkanen,Jarvi,Lithner,Rocha,Osgood,Pommerenke,Pommerenke1984,Sugawa1998,Sugawa2003,Tukia} ).  
A closed set $K \subset \mathbb{C}\cup \{\infty\}$ is said to be uniformly perfect if there exists a constant $C>0$ such that 
$$
\{z \in \mathbb{C}; Cr\le |z-a|\le r\}\cap K \ne \emptyset
$$ 
for every $a \in K$ and $r\in (0,\mathrm{diam}(K))$. 

The concept of uniform perfection is known to have numerous equivalent characterizations (see,e.g., \cite{Ancona,Chen,Jarvi,Pommerenke,Sugawa2001,Sugawa2003}). Below, we list a few: the uniform perfectness of a closed set $K$ is equivalent to the following statements:

$(i)$ there exists a constant $C>0$ such that 
$\mathrm{Cap}(K_r(a))\ge Cr$
for every $r\in (0,\mathrm{diam}(K))$ and $a \in K$,  where $K_r(a):=\overline{D(a,r)} \cap K$, and $\mathrm{Cap}(\cdot)$ denotes the logarithm capacity; 

$(ii)$ the Bergman kernel of $\Omega$ satisfies $$K_{\Omega}(z)\gtrsim \delta_{\Omega}(z)^{-2},$$ and the Bergman metric $b_{\Omega}(z)|\mathrm{d}z|$ satisfies $$b_{\Omega}(z)\gtrsim \delta_{\Omega}(z)^{-1},$$ where $\Omega$ is assumed to be a domain in $\mathbb{C}$ with $\partial \Omega=K$, and $\delta_{\Omega}(z)$ denotes the distance of $z$ to $\partial \Omega$;

$(iii)$ the Poincar\'e metric $\rho_{\Omega}(z)|\mathrm{d}z|$ of $\Omega$ satisfies
$\rho_{\Omega}(z) \gtrsim \delta_{\Omega}(z)^{-1}$, where the domain $\Omega$ is assumed as above; 


$(iv)$ $\Omega$ is uniformly $\Delta$-regular, where the domain $\Omega$ is assumed as above;

$(v)$ there exist constants $\alpha, A>0$ such that  for every $a\in K$ and $r \in \left( 0,\frac{\mathrm{diam}(\Omega)}{2}\right)$, we have
$$
\Lambda^{\alpha}(K_r(a)) \ge A\cdot r^{\alpha},
$$
where $\Lambda^{\alpha}(K)$ denotes the $\alpha$-Hausdorff content of $K\subset \mathbb{C}$.

In \cite{XiongZheng}, a type of domains in $\mathbb{C}$ is studied, whose properties are analogous to those of domains with uniformly perfect boundary. In this paper, a closed set $K \subset \mathbb{C}$ is called $h$-uniformly perfect if there exists a constant $r_0>0$ such that
$$
\left\{ z\in \mathbb{C}; h(r) \le |z-a| \le r        \right\} \cap K \ne \emptyset
$$
for every $a \in K$ and $r \in (0,r_0)$, where $h$ is an increasing function on $[0,r_0)$ satisfying $h(r)\le r$. In particular, if $K$ is bounded and $h$-uniformly perfect with $h(r)=cr \,(0<c<1)$, then $K$ is uniformly perfect. 

We hope to explore this concept more deeply; however, due to certain difficulties in addressing the general situations, we focus on some special cases in this paper. Let $h_{1,\alpha}(r)=r^{\alpha}$ and $h_{2,\beta}(r)=r(-\log r)^{-\beta}$ for some $\alpha>1$ and $\beta>0$. Following \cite{XiongZheng}, we say that a set $E \subset \mathbb{C}$ satisfies condition $(U)_{1,\alpha}$ or $(U)_{2,\beta}$ if it is $Ch_{1,\alpha}$-uniformly perfect or $Ch_{2,\beta}$-uniformly perfect for some $C>0$, respectively. 

A generalization of statement (i) has already been obtained in \cite{XiongZheng}. It is also shown in \cite{XiongZheng} that if the boundary of $\Omega \subset \mathbb{C}$ satisfies $(U)_{1,\alpha}$ with $1<\alpha <2$ or $(U)_{2,\beta}$ with $\beta>0$, then we have
$$
K_{\Omega}(z)\gtrsim \frac{1}{\delta_{\Omega}(z)^2 \log \frac{1}{\delta_{\Omega}(z)}},\,\,\,z \to \partial \Omega
$$
or
$$
K_{\Omega}(z) \gtrsim \frac{1}{\delta_{\Omega}(z)^2\log \log \frac{1}{\delta_{\Omega}(z)}},\,\,\, z\to \partial \Omega,
$$
respectively.
Here, we shall prove the following theorem:
\begin{theorem} Let $\Omega$ be a planar domain and $z \to \partial \Omega$.

$(1)$ If $\Omega$ is bounded and $K_{\Omega}(z)\gtrsim \delta_{\Omega}(z)^{-2}$, then $\partial \Omega$ is uniformly perfect.

$(2)$If $K_{\Omega}(z) \ge \frac{M}{\delta_{\Omega}(z)^2\log \frac{1}{\delta_{\Omega}(z)}}$ for some $M>0$, then there exists some $\alpha>1$ such that $\partial \Omega$ satisfies condition $(U)_{1,\alpha}$. In particular, if $M>\frac{1}{2\pi}$, then $\alpha \in (1,2)$.

$(3)$ If $K_{\Omega}(z) \gtrsim \frac{1}{\delta_{\Omega}(z)^2\log \log \frac{1}{\delta_{\Omega}(z)}}$, then there exists $\beta>0$ such that $\partial \Omega$ satisfies condition $(U)_{2,\beta}$.
\end{theorem}

In particular, if $\Omega$ is bounded, the uniform perfectness of $\partial \Omega$ is equivalent to that $K_{\Omega}(z) \asymp \delta_{\Omega}(z)^{-2}$ , which simplifies the statement $(ii)$. However, it remains unknown whether the uniform perfectness of $\partial \Omega$ is also equivalent to $b_{\Omega}(z)\asymp \delta_{\Omega}(z)^{-1}$.

The concept of $h$-uniform perfectness is also related to the boundary behavior of $\rho_{\Omega}(z)$, the density function of Poincar\'e metric.
\begin{theorem} 
Let $\rho_{\Omega}(z)|\mathrm{d}z|$ be the Poincar\'e metric of a planar domain $\Omega$ and $z \to \partial \Omega$.

$(1)$ If $\rho_{\Omega}(z) \ge \frac{M}{\delta_{\Omega}(z)\log \frac{1}{\delta_{\Omega}(z)}}$ for some $M>1$, then  $\partial \Omega$ satisfies $(U)_{1,\alpha}$ for some $\alpha>1$.

$(2)$ $\rho_{\Omega}(z)\gtrsim \frac{1}{\delta_{\Omega}(z)\log \log \frac{1}{\delta_{\Omega}(z)}} \Leftrightarrow \partial \Omega$ satisfies $(U)_{2,\beta}$ for some $\beta >0$.
\end{theorem}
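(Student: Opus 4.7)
The plan is to mirror the proof of Theorem 1.1, with the Bergman-kernel upper bound on an embedded annulus replaced by the classical explicit formula for the Poincar\'e density
\[
\rho_A(w)=\frac{\pi}{2|w-a|\log(r_2/r_1)\sin\!\bigl(\pi\log(|w-a|/r_1)/\log(r_2/r_1)\bigr)},\qquad A=\{r_1<|w-a|<r_2\},
\]
under the normalization $\rho_\mathbb{D}(z)=1/(1-|z|^2)$; the comparison $\rho_\Omega\le\rho_A$ on any annulus $A\subset\Omega$ drives the argument.

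For Part (1), I argue by contrapositive: assume $\partial\Omega$ fails $(U)_{1,\alpha}$ for every $\alpha>1$. Fix $\alpha>(\pi+4M)/(4M-\pi)$ (admissible because $M>\pi/4$) and any $c>0$. The contrapositive assumption, applied at arbitrarily small $r_0$, yields $a\in\partial\Omega$ and $r\in(0,r_0)$ with $A_{r,\alpha}:=\{cr^\alpha<|w-a|<r\}$ disjoint from $\partial\Omega$. As in Theorem 1.1(3), by choosing the witness so that a nearby interior point of $\Omega$ falls in the annulus, we may arrange $A_{r,\alpha}\subset\Omega$. Evaluating the annulus density at the geometric midpoint $w_0=a+\sqrt{c}\,r^{(\alpha+1)/2}e^{i\theta}$, at which the sine factor equals $1$, gives
\[
\rho_{A_{r,\alpha}}(w_0)=\frac{\pi}{2\sqrt{c}\,r^{(\alpha+1)/2}\,W},\qquad W=(\alpha-1)\log(1/r)-\log c.
\]
Combining this with the hypothesis through $\rho_\Omega\le\rho_A$, and using the bound $\delta(w_0)\le|w_0-a|=\sqrt{c}\,r^{(\alpha+1)/2}$ together with the monotonicity of $x\log(1/x)$ for small $x>0$, I obtain
\[
\pi(\alpha+1)\log(1/r)\ge 4M(\alpha-1)\log(1/r)+O(1).
\]
Since $\pi(\alpha+1)<4M(\alpha-1)$ by the choice of $\alpha$, letting $r\to 0$ produces a contradiction.

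For Part (2), the forward direction $\rho_\Omega\gtrsim 1/[\delta\log\log(1/\delta)]\Rightarrow(U)_{2,\beta}$ follows the same scheme with the gap function $h_{2,\beta}(r)=cr(-\log r)^{-\beta}$: the corresponding annulus has modulus $W=\beta\log\log(1/r)+O(1)$ and geometric midpoint at distance $\sqrt{c}\,r(-\log r)^{-\beta/2}$ from $a$, and since $\log\log(1/\delta(w_0))\approx\log\log(1/r)$ the same comparison forces $\beta$ to be bounded above by a constant depending on the implicit constant in the hypothesis. For the reverse direction, assume $(U)_{2,\beta}$ holds and invoke the Beardon-Pommerenke lower bound \cite{BeardonPommerenke}: for $z\in\Omega$ with closest boundary point $a$, choosing $r$ so that $cr(-\log r)^{-\beta}\asymp\delta(z)$ produces $b\in\partial\Omega$ with $\delta(z)\le|b-a|\le r\asymp\delta(z)(-\log\delta(z))^{\beta}$, giving
\[
\rho_\Omega(z)\gtrsim\frac{1}{\delta(z)\log(|b-a|/\delta(z))}\gtrsim\frac{1}{\delta(z)\log\log(1/\delta(z))}.
\]

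The principal technical subtlety is ensuring that the gap annulus in Part (1) lies in $\Omega$ rather than in the interior of $\Omega^c$; this is nontrivial only when $\Omega^c$ has interior near $\partial\Omega$, and is handled as in Theorem 1.1 by selecting the witness $(a,r)$ so that an interior point of $\Omega$ lies within the annulus. The precise constant $\pi/4$ arises from evaluating at the geometric midpoint of the annulus, at which the sine factor of the Poincar\'e density attains its maximum value $1$.
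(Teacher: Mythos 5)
Your proposal is correct and follows essentially the same route as the paper: both arguments pass to the gap annulus $A$, verify $\overline{A}\subset\Omega$ by a connectedness argument, compare $\rho_\Omega\le\rho_A$ using the explicit annulus density evaluated at the geometric midpoint (where the sine factor is $1$, which is exactly the paper's $\rho_R(z_0)=\pi/(4rm)$), and for the reverse implication in Part (2) invoke the Beardon--Pommerenke estimate. The only cosmetic difference is that you fix $\alpha>(\pi+4M)/(4M-\pi)$ in advance rather than first deriving $M\le\tfrac{\pi}{4}\cdot\tfrac{\alpha+1}{\alpha-1}$ for every $\alpha>1$ and then letting $\alpha\to\infty$, and in the reverse direction you choose $r$ with $h(r)\asymp\delta(z)$ rather than simply taking $r=\delta(z)$ as the paper does; both give the same bound $\beta_\Omega(z)\lesssim\log\log(1/\delta(z))$.
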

The condition $M>1$ in $(1)$ is sharp, since $\rho_{\mathbb{D}-\{0\}}(z)=\frac{1}{|z|\log \frac{1}{|z|}}$.  The proof of Theorem 1.2 is analogous to that of Theorem 1.1, which is inspired by Beardon-Pommerenke \cite{BeardonPommerenke}. 

The concept of uniformly $\Delta$-regular is introduced by Ancona in \cite{Ancona}. A domain $\Omega$ is said to be $\Delta$-regular, if there exists a constant $\varepsilon>0$ such that
$$
w_{a,r}(z)\le 1-\varepsilon
$$
for every $a\in \partial \Omega,r\in (0,\mathrm{diam}(\Omega))$, and $z\in \partial D(a,r/2)$, where $w_{a,r}$ denotes the harmonic measure of $\Omega \cap \partial D(a,r)$ in the domain $\Omega \cap D(a,r)$. Analogously, we say that a domain $\Omega \subset \mathbb{C}$ satisfies condition $(\Delta)_h$, if there exist $\varepsilon \in (0,1)$ and $r_0>0$ such that 
$$
w_{a,r}(z)\le 1-\varepsilon
$$  
for every $a\in \partial \Omega, r\in (0,r_0)$, and $z\in \Omega \cap \partial D(a,h(r))$. For $\alpha>1$ and $\beta>0$, let $h_{1,\alpha}(t)=t^{\alpha}$, and $h_{2,\beta}(t)=t\left(\log \frac{1}{t}\right)^{-\beta}$. A domain $\Omega$ is said to satisfy $(\Delta)_{1,\alpha}$ or $(\Delta)_{2,\beta}$ if there exists a $C>0$ such that $\Omega$ satisfies condition $(\Delta)_{Ch_{1,\alpha}}$ or $(\Delta)_{Ch_{2,\beta}}$, respectively. We obtain

\begin{theorem}
Let $\Omega$ be a domain in $\mathbb{C}$.

$(1)$ If $\partial \Omega$ satisfies condition $(U)_{1,\alpha}$ for some $\alpha \in (1,2)$, then there exists a constant $\alpha'>0$ such that $\Omega$ satisfies $(\Delta)_{1,\alpha'}$. Conversely, if $\Omega$ satisfies $(\Delta)_{1,\alpha'}$ for some $\alpha'>0$, then there exists a constant $\alpha>1$ such that $\partial \Omega$ satisfies $(U)_{1,\alpha}$.

$(2)$ $\partial \Omega$ satisfies $(U)_{2,\beta}$ for some $\beta>0$ if and only if $\Omega$ satisfies $(\Delta)_{2,\beta'}$ for some $\beta'>0$.
\end{theorem}

Now we focus on the statement (v), which implies that the Hausdorff dimension of a uniformly perfect set is always positive. However, for a closed set $E \subset \mathbb{C}$ only meets condition $(U)_{1,\alpha}$ or $(U)_{2,\beta}$, it may not be detected by the classical Hausdorff dimension (see Appendix). Here we provide an analogue of statement (v):

\begin{theorem}
Let $E$ be a closed set in $\mathbb{C}$.

$(1)$ $E$ satisfies contion $(U)_{1,\alpha}$ for some $\alpha >1 \Leftrightarrow$ there exist positive constants $A,C,r_0$ and $\gamma$ such that
$$
\Lambda_{g_{1,\gamma}}(E\cap \overline{B}(a,r)) \ge A \cdot g_{1,\gamma}(2r)
$$
for every $a \in E$ and $r \in (0,r_0)$, where $g_{1,\gamma}(t)$ is an increasing function on $(0,+\infty)$ such that 
$$g_{1,\gamma}(t)=\left( \log \frac{1}{Ct} \right)^{-\gamma}
$$
on $(0,2r_0),$ and $\Lambda_{g_{1,\gamma}}$ denotes the $g_{1,\gamma}$-Hausdorff content.
 
$(2)$ $E$ satisfies condition $(U)_{2,\beta}$ for some $\beta >0 \Leftrightarrow$ there exist positive constants $A,r_0$ and $\eta$ such that
$$
\Lambda_{g_{2,\eta}}(E\cap \overline{B}(a,r)) \ge A \cdot g_{2,\eta}(2r)
$$
for every $a \in E$ and $r \in (0,r_0)$, where $g_{2,\eta}(t)$ is an increasing function on $(0,+\infty)$ such that
$$
g_{2,\eta}(t)=\exp \left(-\eta\cdot \frac{\log 2/t}{\log \log 4/t} \right)
$$
on $(0,2r_0),$ and $\Lambda_{g_{2,\eta}}$ denotes the $g_{2,\eta}$-Hausdorff content.
\end{theorem}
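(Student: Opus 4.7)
I prove part (1) in full; part (2) follows the same template with the polynomial recursion $r\mapsto r^{\alpha}$ replaced by the slower $r\mapsto r(-\log r)^{-\beta}$ and the exponent $\gamma=\log 2/\log\alpha$ replaced by $\eta=\log 2/\beta$. Both implications parallel the classical proof of characterization (ii) for uniformly perfect sets: to extract a Hausdorff-content estimate from an annulus condition I build a Cantor-like subset and apply the mass distribution principle, and to reverse this I exploit that a missing annulus in $E\cap\overline{B}(a,r)$ produces a cheap single-set cover.

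\emph{Easy direction $(\Leftarrow)$.} After shrinking $A$ I may assume $A<1$. Fix $a\in E$ and $r\in(0,r_0)$, and let $s:=\sup\{|z-a|:z\in E\cap\overline{B}(a,r)\}$, attained by closedness. The single-set cover $\{\overline{B}(a,s)\}$ of $E\cap\overline{B}(a,r)$ produces
\[ A\,g_{1,\gamma}(2r)\le\Lambda_{g_{1,\gamma}}(E\cap\overline{B}(a,r))\le g_{1,\gamma}(2s). \]
Unpacking the gauge and taking logarithms rearranges this to $\log(1/2Cs)\le A^{-1/\gamma}\log(1/2Cr)$, and exponentiating gives $s\ge c\,r^{\alpha}$ with $\alpha:=A^{-1/\gamma}>1$ and $c:=(2C)^{\alpha-1}$. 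The extremal point witnessing $s$ then lies in the annulus $\{c\,r^{\alpha}\le|z-a|\le r\}\cap E$, which is exactly $(U)_{1,\alpha}$ with constant $C_1=c$.

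\emph{Hard direction $(\Rightarrow)$.} Assume $(U)_{1,\alpha}$. I construct a binary Cantor tree inside $E\cap\overline{B}(a,r)$: set $a_{\emptyset}=a$ and $r_0=r/2$, and at each node $a_I$ of level $k$ invoke $(U)_{1,\alpha}$ at scale $r_k$ to produce $a'_I\in E$ with $C_1r_k^{\alpha}\le|a'_I-a_I|\le r_k$; declare children $a_{I0}=a_I$, $a_{I1}=a'_I$, and set $r_{k+1}=C_1r_k^{\alpha}/8$. A short induction shows the $2^{k}$ level-$k$ centers are pairwise at distance $\ge 4r_k$, and solving the log-linear recursion yields $\log(1/r_k)\asymp\alpha^{k}\log(1/r)$ for $r$ small. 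Spread Frostman mass $2^{-k}$ equally over the level-$k$ disks $\overline{B}(a_I,r_k)$ to obtain a probability measure $\mu$ on the limit set. The separation forces a ball of radius $\rho\in[r_k,r_{k-1})$ to meet only $O(1)$ parent disks, so with $\gamma:=\log 2/\log\alpha$,
\[ \mu(B(z,\rho))\lesssim 2^{-k}\asymp \frac{g_{1,\gamma}(r_k)}{g_{1,\gamma}(r)}\le \frac{g_{1,\gamma}(\rho)}{g_{1,\gamma}(r)}. \]
The mass distribution principle then gives $\Lambda_{g_{1,\gamma}}(E\cap\overline{B}(a,r))\gtrsim g_{1,\gamma}(r)\gtrsim g_{1,\gamma}(2r)$ for $r$ small, the last comparison holding because $g_{1,\gamma}(2r)/g_{1,\gamma}(r)\to 1$ as $r\to 0$.

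\emph{Main obstacle and part (2).} The technical heart is the Cantor bookkeeping: I must choose the shrinking rule $r_{k+1}=C_1r_k^{\alpha}/8$ so that the $4r_k$-separation survives every inductive step, the whole tree sits inside $E\cap\overline{B}(a,r)$, and the resulting scale sequence satisfies $\log(1/r_k)\asymp\alpha^{k}\log(1/r)$ cleanly enough to identify $2^{-k}$ with $g_{1,\gamma}(r_k)/g_{1,\gamma}(r)$ up to absolute constants; once these are in place the Frostman step is routine. For part (2) the recursion becomes $r_{k+1}\asymp r_k(-\log r_k)^{-\beta}$, and setting $L_k:=\log(1/r_k)$ converts it into $L_{k+1}=L_k+\beta\log L_k+O(1)$, whose solution obeys $k\asymp L_k/(\beta\log L_k)$; the choice $\eta=\log 2/\beta$ then matches $2^{-k}$ to $g_{2,\eta}(r_k)/g_{2,\eta}(r)$, and the same mass distribution argument (and the same easy reverse direction) concludes.
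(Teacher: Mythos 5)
Your proposal is essentially the same argument as the paper's: for the forward implication both build a binary Cantor tree by iterating the annulus condition, endow it with the uniform Bernoulli measure (mass $2^{-k}$ on each level-$k$ disk), and invoke a mass-distribution/Frostman estimate, matching $2^{-k}$ with the gauge ratio $g(\rho)/g(r)$ via the scaling recursion $\log(1/r_k)\asymp\alpha^k\log(1/r)$ in case (1) and the recursion for $L_k/\log L_k$ in case (2); for the reverse implication both exploit that a single-disk cover bounds $\Lambda_g(\overline B(a,\rho))\le g(2\rho)$ and compare it against the assumed lower bound. The only cosmetic differences are (i) your reverse direction isolates the supremum distance $s$ and derives $\alpha=A^{-1/\gamma}$ directly rather than arguing by contradiction that $E\cap\overline B(a,r)\not\subset\overline B(a,h(r))$, which is a slightly cleaner phrasing of the same estimate, (ii) your recursion $r_{k+1}=C_1r_k^\alpha/8$ and claimed $4r_k$-separation replaces the paper's nested construction via a lemma producing two disjoint sub-disks of radius $\tilde c\,h(r/2)$ inside $\overline B(a,r)$ — your version is not nested so it needs an extra check that $\sum_j r_j\le r$ (true with your choice $r_0=r/2$ since the $r_j$ decay superexponentially, but worth stating), and (iii) your $\eta=\log 2/\beta$ omits the absorption of the implicit comparison constant $C_1$ from the recursion asymptotics into $\eta$; the paper takes $\eta=\log 2/(C_1\beta)$, and since both only assert existence of some positive $\eta$ this is immaterial. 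None of these is a substantive gap.
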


\section{Proof of Theorem 1.1}
Let $R=\{z \in \mathbb{C}; r<|z|<1\}$. Since $$\left\{\frac{z^n}{\|z^n\|_{L^2(R)}}; n \in \mathbb{Z}\right\}
$$ forms an orthnormal basis of $A^2(R)$,  
 the Bergman kernel of $A^2(R)$ can be represented by
$$
K_{R}(z,w)= \frac{1}{2\pi z \overline{w}\log \frac{1}{r}}+\frac{1}{\pi z \overline{w}}\sum_{n\in \mathbb{Z}-\{0\}}\frac{n(z\overline{w})^n}{1-r^{2n}}.
$$
Let $z\in R$ with $|z|=r^t$, where $r,t \in (0, \frac{1}{2}]$, then
\begin{eqnarray}
\notag K_R(z)&=&\frac{1}{2\pi r^{2t}\log \frac{1}{r}}+\frac{1}{\pi r^{2t}}\left(\sum_{n=1}^{+\infty}\frac{nr^{2nt}}{1-r^{2n}}+\sum_{n=1}^{+\infty}\frac{(-n)r^{-2nt}}{1-r^{-2n}} \right)\\
\notag &\le& \frac{1}{2\pi r^{2t}\log \frac{1}{r}} +\frac{1}{\pi r^{2t}}\sum_{n=1}^{\infty}\frac{n(r^{2nt}+r^{2n-2nt})}{1-r^{2n}}   \\
\notag &\le &\frac{1}{2\pi r^{2t}\log \frac{1}{r}}+\frac{2}{\pi r^{2t}}\sum_{n=1}^{+\infty}\frac{nr^{2nt}}{1-r^{2n}}\\
\notag &\le &\frac{1}{2\pi r^{2t}\log \frac{1}{r}}+\frac{8}{3\pi r^{2t}}\sum_{n=1}^{+\infty} nr^{2nt}\\
\notag &=&\frac{1}{2\pi r^{2t}\log \frac{1}{r}}+\frac{8}{3\pi r^{2t}}\cdot \frac{r^{2t}}{(1-r^{2t})^2}\\
\notag &\le&\frac{1}{2\pi r^{2t}\log \frac{1}{r}}+\frac{8}{3\pi}\cdot \frac{1}{(1-\frac{1}{2^{2t}})^2}\\
&=&\frac{1}{2\pi r^{2t}\log \frac{1}{r}}+\mathrm{const}(t), \label{2.1}
\end{eqnarray}
where $\mathrm{const}(t)$ denotes a constant that depends only on $t$. In particular, if $t=\frac{1}{2}$, then $$K_R(z)\le \frac{1}{2\pi r \log \frac{1}{r}}+\frac{32}{3\pi}.$$

\begin{proof}[Proof of Theorem 1.1]
$(1)$ Notice that $\partial \Omega$ is uniformly perfect is equivalent to the existence of  constants $C\in (0,1)$ and $r_0>0$ such that 
$$
\{ z\in \mathbb{C}; Cr \le |z-a| \le r \} \cap \partial \Omega \ne \emptyset, \,\,\,\forall a\in \partial \Omega, r\in (0,r_0),
$$
since $\partial \Omega$ is bounded. Now we suppose on the contrary that $\partial \Omega$ is not uniformly perfect. Then for every $C \in (0,1)$ and $r_0>0$, there exist $a \in \partial \Omega$ and $r\in (0,r_0)$ such that
$$
\{ z\in \mathbb{C}; Cr \le |z-a| \le r \} \cap \partial \Omega =\emptyset.
$$
We may take $C<\frac{1}{4}, r_0<\frac{\mathrm{diam}(\Omega)}{2}$. Denote $A =\{ z\in \mathbb{C};Cr < |z-a|< r \}$. Notice that $r_0<\mathrm{diam}(\Omega) ,$ if $\overline{A} \subset (\Omega^c)^{o}$, then $\Omega$ is not connected. Therefore, $\overline{A} \subset \Omega$.

Take $z_0 \in A$ with $|z_0-a|=\sqrt{C}\cdot r$. Let
$$
T: A \to R=\{z\in \mathbb{C}; C<|z|<1\}, z \to \frac{z-a}{r}.
$$
Write $z_1=T(z_0)$, then from (\ref{2.1}) we see that
\begin{eqnarray*}
K_{A}(z_0)&=&K_{R}(z_1)|T'(z_0)|^2\\
&\le& \left( \frac{1}{2\pi C\log \frac{1}{C}}+\frac{32}{3\pi} \right)\cdot \frac{1}{r^2}.
\end{eqnarray*}
Since $\delta_{\Omega}(z_0)\le |z_{0}-a|=\sqrt{C}\cdot r$, we have
$$
K_A(z_0) \le \left( \frac{1}{2\pi C \log \frac{1}{C}}+\frac{32}{3\pi} \right)\cdot \frac{C}{\delta_{\Omega}(z_0)^2}.
$$
On the other hand,  since $A$ is a subdomain of $\Omega$, we have $$K_{A}(z_0)\ge K_{\Omega}(z_0) \ge M\cdot \delta_{\Omega}(z_0)^{-2}$$ for some constant $M>0$. 
Thus, for each $C \in (0,\frac{1}{4})$ and $r_0>0$, there exists a point $z_0 \in \Omega$ with $\delta_{\Omega}(z_0)<r_0$ such that
$$
0<M \le C\cdot \left(\frac{1}{2\pi C\log \frac{1}{C}}+\frac{32}{3\pi}\right).
$$
But this leads to a contradiction, since $C$ can be taken sufficiently small. Therefore, $\partial \Omega$ must be uniformly perfect. 

$(2)$ Let $t \in (0,\frac{1}{2}]$. Suppose, the contrary, that  $\partial \Omega$ does not satisfy condition $(U)_{1,\alpha}$ for every $\alpha \in (1,2)$. Then for every $C>0$ and $r_0>0$, there exist $a \in \partial \Omega$ and $r \in (0,r_0)$ such that
$$
\{z\in \mathbb{C}; Cr^{\alpha}\le |z-a| \le r \}\cap \partial \Omega =\emptyset.
$$
Write $A:=\{z\in \mathbb{C}; Cr^{\alpha}< |z-a| < r \}$, then $\overline{A}\subset \Omega$ as in $(1)$. Let
$$
T:A \to R=\left\{ z\in \mathbb{C}; Cr^{\alpha-1} <|z|< 1 \right\}, z \to \frac{z-a}{r}.
$$
Take $z_0\in A$ with $ |z_0-a|=C^tr^{t(\alpha-1)+1}$. Write $z_1=T(z_0)$, then $|z_1|=(Cr^{\alpha-1})^t$. Thus, (\ref{2.1}) implies
\begin{equation}\label{Bergmankernel3}
K_A(z_0)=K_R(z_1)|T'(z_0)|^2 \le \left( \frac{1}{2\pi (Cr^{\alpha-1})^{2t}\log \frac{1}{Cr^{\alpha-1}}}+\mathrm{const}(t) \right) \cdot \frac{1}{r^2}.
\end{equation}
Since
$$
\delta_{\Omega}(z_0) \le |z_0-a| =C^tr^{t(\alpha-1)+1}
$$
and
$$
\delta_{\Omega}(z_0) \ge |z_0-a|-Cr^{\alpha}\ge \frac{C^t}{2}r^{t(\alpha-1)+1}
$$
for  sufficiently small $r_0$, we have
\begin{equation}\label{restimate}
\left( \frac{\delta_{\Omega}(z_0)}{C^t} \right)^{\frac{1}{t(\alpha-1)+1}} \le r \le \left( \frac{2\delta_{\Omega}(z_0)}{C^t} \right)^{\frac{1}{t(\alpha-1)+1}}. 
\end{equation}
Then 
\begin{eqnarray*}
K_{A}(z_0) &\le& \mathrm{const}(t)\left(\frac{C^t}{\delta_{\Omega}(z_0)}\right)^{\frac{2}{t(\alpha-1) +1}} +\frac{1}{2\pi C^{2t} r^{2t(\alpha-1) +2}\left((\alpha -1)\log \frac{1}{r} -\log C \right)}\\
&\le& \mathrm{const}(t,C) \delta_{\Omega}(z_0)^{-\frac{2}{t(\alpha-1) +1}} +\frac{t(\alpha-1)+1}{2\pi(\alpha-1)\delta_{\Omega}(z_0)^2}\cdot\frac{1}{ \left( \log \frac{1}{\delta_{\Omega}(z_0)}+ \mathrm{const}(t,C,\alpha) \right)}
\end{eqnarray*}
in view of  (\ref{Bergmankernel3}) and (\ref{restimate}).
On the other hand, 
$$
K_{A}(z_0) \ge K_{\Omega}(z_0) \ge M \cdot \frac{1}{\delta_{\Omega}(z_0)^2 \log \frac{1}{\delta_{\Omega}(z_0)}},
$$
where $M>0$. Thus, for every $t\in (0,\frac{1}{2}], \alpha \in (1,\infty), C>0$ and $r_0>0$, there exists a point $z_0 \in \Omega$ with $\delta_{\Omega}(z_0) \le r_0$, such that
$$
0<M \le \mathrm{const}(t,C)\delta_{\Omega}(z_0)^{2-\frac{2}{t(\alpha-1)+1}}\log \frac{1}{\delta_{\Omega}(z_0)}+\frac{t(\alpha-1)+1}{2\pi (\alpha-1)}\cdot \frac{\log \frac{1}{\delta_{\Omega}(z_0)}}{\log \frac{1}{\delta_{\Omega}(z_0)}+\mathrm{const}(t,C,\alpha)}. 
$$
Let $C$ be fixed and let $ r_0 \to 0$, then
$$
0<M \le \frac{t(\alpha-1)+1}{2 \pi(\alpha-1)}, \,\,\, \forall \alpha \in (1,+\infty), t\in (0,1/2],
$$
which is a contradiction as $\alpha \to +\infty$ and $ t \to 0$. 

If $M > \frac{1}{2\pi}$, a similar process with $\alpha \to 2$ and $ t \to 0$ implies that $\partial \Omega$ satisfies $(U)_{1,\alpha}$ for some $\alpha \in (1,2)$.

$(3)$ Suppose on the contrary that  $\partial \Omega$ does not satisfies condition $(U)_{2,\beta}$ for every $\beta>0$. Then, for every $C>0$ and $r_0>0$, there exist $a \in \partial \Omega$ and $r\in (0,r_0)$ such that
$$
\left\{ z\in \mathbb{C}; Cr\left(\log \frac{1}{r}\right)^{-\beta} \le |z-a| \le r \right\}\cap \partial \Omega=\emptyset.
$$
Similar to $(1)$, we write $A:=\left\{ z\in \mathbb{C}; Cr\left(\log \frac{1}{r}\right)^{-\beta} < |z-a| < r \right\}$. Then  $\overline{A} \subset \Omega$. Let
$$
T:A \to R=\left\{ z\in \mathbb{C}; C\left(\log \frac{1}{r} \right)^{-\beta} <|z|< 1 \right\}, z \to \frac{z-a}{r}.
$$
Take $z_0 \in A$ with $|z_0-a|=\sqrt{C}r\left(\log \frac{1}{r}\right)^{-\beta/2}.$ Then
\begin{eqnarray}
\notag K_{A}(z_0) &\le& \left( \frac{1}{2\pi C(\log \frac{1}{r})^{-\beta}\log \frac{(\log \frac{1}{r})^{\beta}}{C}} +\frac{32}{3\pi}\right)\cdot \frac{1}{r^2}\\
&=& \left( \frac{1}{2\pi \beta \log\log \frac{1}{r}-2\pi \log C} +\frac{32}{3\pi}C\left(\log \frac{1}{r}\right)^{-\beta}   \right)\cdot \frac{1}{\left(\sqrt{C}r\left(\log \frac{1}{r}\right)^{-\beta /2}\right)^2}. \label{bergmankernelring}
\end{eqnarray}
Here
\begin{equation}\label{deltaless}
\delta_{\Omega}(z_0) \le |z_0-a|=\sqrt{C}r\left(\log \frac{1}{r}\right)^{-\beta /2}
\end{equation} 
and
\begin{equation}\label{deltamore}
\delta_{\Omega}(z_0)\ge |z_0-a|-Cr\left(\log \frac{1}{r}\right)^{-\beta} \ge \frac{\sqrt{C}}{2}r\left(\log \frac{1}{r}\right)^{-\beta /2}.
\end{equation}
From the proof of Lemma 5.2 in \cite{XiongZheng}, it is clear that if 
$$
h(t)=Ct\left(\log \frac{1}{t}\right)^{-\beta},
$$
then $g=h^{-1}$ satisfies $g(t) \le \frac{1}{C}t\left(\log \frac{1}{t}\right)^{\beta}$ for $t \ll 1$.
So (\ref{deltamore}) implies
\begin{equation}\label{rless}
r \le \frac{2}{\sqrt{C}}\delta_{\Omega}(z_0)\left( \log \frac{1}{\delta_{\Omega}(z_0)} \right)^{\beta /2}.
\end{equation}
By (\ref{deltaless}), (\ref{deltamore}), (\ref{rless}), and (\ref{bergmankernelring}), we have
$$
K_A(z_0)\le I\cdot \frac{1}{\delta_{\Omega}(z_0)^2},
$$
where
\begin{eqnarray}
\notag I &=& \frac{1}{2\pi \beta \log\log \frac{1}{\delta_{\Omega}(z_0)} +2\pi \beta \log \left(1+(\log \frac{1}{\delta_{\Omega}(z_0)})^{-1}(\log \frac{\sqrt{C}}{2}-\frac{\beta}{2}\log \log \frac{1}{\delta_{\Omega}(z_0)})\right)-2\pi \log C}\\
\notag &&+\frac{32C}{3\pi} \left(  \log \frac{1}{\delta_{\Omega}(z_0)}-\frac{\beta}{2}\log \log \frac{1}{\delta_{\Omega}(z_0)}+\log \frac{\sqrt{C}}{2}   \right)^{-\beta} \label{Iless}\\
\notag&=& \frac{1}{2\pi \beta \log\log \frac{1}{\delta_{\Omega}(z_0)} +O\left(\frac{\log \log \frac{1}{\delta_{\Omega}(z_0)}}{\log \frac{1}{\delta_{\Omega}(z_0)}}\right)}+O\left(\left(\log \frac{1}{\delta_{\Omega}(z_0)}\right)^{-\beta}\right).
\end{eqnarray}
On the other hand, we have $K_A(z_0)\ge K_{\Omega}(z_0) \ge M\cdot \frac{1}{\delta_{\Omega}(z_0)^2\log \log \frac{1}{\delta_{\Omega}(z_0)}}$ for some $M>0$. Thus, for each $\beta,C$ and $r_0>0$, we can find a point $z_0 \in \Omega$ with $\delta_{\Omega}(z_0)<r_0$, such that
$$
0<M \le I \cdot \log \log \frac{1}{\delta_{\Omega}(z_0)}.
$$
Let $C$ be fixed and $r_0 \to 0$. Then we obtain
$$
0<M\le \frac{1}{2\pi \beta}, \,\,\forall \beta>0.
$$
But it is impossible for $\beta \gg 1$, which completes the proof of $(3)$.

\end{proof}

\section{Proof of Theorem 1.2}
We denote the Poincar\'e metric of a hyperbolic domain $\Omega \subset \mathbb{C}$ by $\rho_{\Omega}(z)|\mathrm{d}z|$, whose curvature is $-1$. It is obtained by Beardon-Pommerenke \cite{BeardonPommerenke} that
\begin{equation}\label{BeardonPommerenke}
\rho_{\Omega}(z) \asymp \frac{1}{\delta_{\Omega}(z)(\beta_{\Omega}(z)+C)},
\end{equation}
where $$\beta_{\Omega}(z)=\inf \left\{\left|\log \left|\frac{z-a}{b-a}\right|\right|;a\in \partial \Omega,b\in \partial \Omega, |z-a|=\delta_{\Omega}(z)\right\}.$$ Let $\partial \Omega$ be $h$-uniformly perfect. For each $w \in \Omega$, choose a point $a\in \partial \Omega$ with $ |w-a|=\delta_{\Omega}(w)$. If $w$ is sufficiently close to $\partial \Omega$, then there exists a point
$$
b \in \{ z\in \mathbb{C}; h(\delta_{\Omega}(w)) \le |z-a| \le \delta_{\Omega}(w) \}\cap \partial \Omega.
$$
Then
$$
\beta_{\Omega}(w) \le \left| \log \frac{|w-a|}{|b-a|}  \right| \le \log \frac{\delta_{\Omega}(w)}{h(\delta_{\Omega}(w))}.
$$
In view of (\ref{BeardonPommerenke}), we have
$$
\rho_{\Omega}(z) \gtrsim \frac{1}{\delta_{\Omega}(z)\log \frac{\delta_{\Omega}(z)}{h(\delta_{\Omega}(z))}}.
$$
In particular, if $\partial \Omega$ satisfies $(U)_{1,\alpha}$ or $ (U)_{2,\beta}$ , then we obtain
$$
\rho_{\Omega}(z) \gtrsim \frac{1}{\delta_{\Omega}(z) \log \frac{1}{\delta_{\Omega}(z)}}
$$
or
$$
\rho_{\Omega}(z) \gtrsim \frac{1}{\delta_{\Omega}(z) \log \log \frac{1}{\delta_{\Omega}(z)}},
$$
respectively. The first estimate is trivial. 

The proof of Theorem 1.2 is almost identical to that of Theorem 1.1, except that the Bergman kernel is replaced by Poincar\'e metric. For completeness, we provide the details here. Recall that 
for $R=\left\{z\in \mathbb{C}; \frac{1}{R}<|z|<R\right\}$,  we have 
\begin{equation}\label{3.2'}
\rho_R(z_0)=\frac{\pi}{2\log R}\cdot \frac{1}{|z|\cos\left( \frac{\pi \log |z|}{2\log R} \right)},
\end{equation}
and for $R=\{z\in \mathbb{C}; re^{-m}<|z|<re^m\}$,  we have 
\begin{equation}\label{3.2}
\rho_R(z_0)=\frac{\pi}{2 rm},
\end{equation} 
where $z_0 \in R$, $|z_0|=r$  (cf. \cite{BeardonMinda, BeardonPommerenke}). 

\begin{proof}[Proof of Theorem 1.2]
$(1)$ Let $t \in (0,\frac{1}{2}]$. Suppose on the contrary that  $\partial \Omega$ does not satisfies condition $(U)_{1,\alpha}$ for every $\alpha>1$. Then, for every $C,r_0>0,$ there exist a point $a \in \partial \Omega$ and a radius $r \in (0,r_0)$ such that
$$
\{ z \in \mathbb{C}; Cr^{\alpha} \le |z-a| \le r   \} \cap \partial \Omega =\emptyset.
$$
Let $A:=\{ z \in \mathbb{C}; Cr^{\alpha} < |z-a| < r   \}$. If $r_0<\mathrm{diam}( \Omega)/2$, then $\overline{A} \subset \Omega$. Take $z_0 \in A$ with $|z_0-a|=C^tr^{t(\alpha-1)+1}$. Let
$$ 
T: A \to R:=\left\{z\in \mathbb{C}; C^{\frac{1}{2}}r^{\frac{\alpha-1}{2}}<|z|<C^{-\frac{1}{2}}r^{\frac{1-\alpha}{2}} \right\}, z \to C^{-\frac{1}{2}}r^{-\frac{\alpha+1}{2}}(z-a)
$$
and $z_1=T(z_0)$.
Then $|z_1|=C^{t-\frac{1}{2}}r^{t(\alpha-1)+\frac{1-\alpha}{2}}$.
Hence, (\ref{3.2'}) implies
\begin{eqnarray*}
\rho_A(z_0)&=&\rho_R(z_1)|T'(z_0)|\\
&=&  \frac{\pi C^{-\frac{1}{2}}r^{-\frac{\alpha+1}{2}}}{2\log(C^{-\frac{1}{2}}r^{\frac{1-\alpha}{2}})}\cdot \left(C^{-\frac{1}{2}}r^{-\frac{\alpha+1}{2}}|z_0-a|\cos\left(\frac{\pi \log(C^{t-\frac{1}{2}}r^{t(\alpha-1)+\frac{1-\alpha}{2}})}{2\log(C^{-\frac{1}{2}}r^{\frac{1-\alpha}{2}})} \right)\right)^{-1}\\
&\le&\frac{\pi}{2\log(C^{-\frac{1}{2}}r^{\frac{1-\alpha}{2}})\delta_{\Omega}(z_0)}\cdot \frac{1}{\cos\left( \frac{\pi}{2}(1-2t)\right)}.
\end{eqnarray*}
We infer from (\ref{restimate}) that
\begin{eqnarray*}
\rho_A(z_0)
&\le& \frac{ (t(\alpha-1)+1)\pi}{(\alpha-1) \delta_{\Omega}(z_0)}\cdot \frac{1}{\log \frac{1}{\delta_{\Omega}(z_0)}+\log \frac{C^t}{2}-\frac{t(\alpha-1)+1}{(\alpha-1)}\log C}\cdot \frac{1}{\cos\left( \frac{\pi}{2}(1-2t)\right)}.
\end{eqnarray*}
Notice that $\rho_A(z_0) \ge \rho_{\Omega}(z_0) \ge \frac{M}{\delta_{\Omega}(z_0)\log \frac{1}{\delta_{\Omega}(z_0)}}$, since $A \subset \Omega$. Thus, for each $\alpha>1, t\in (0,\frac{1}{2}]$, and $C,r_0>0$, there exist $r\in (0,r_0)$ and $z_0 \in \Omega$ with $\delta_{\Omega}(z_0)<r_0$, such that
\begin{eqnarray*}
1<M &\le& \frac{(t(\alpha -1)+1)\pi}{(\alpha-1)} \cdot \frac{\log \frac{1}{\delta_{\Omega}(z_0)}}{\log \frac{1}{\delta_{\Omega}(z_0)}+\mathrm{const}(t,C,\alpha)} \cdot  \frac{1}{\cos\left( \frac{\pi}{2}\cdot (1-2t)\right)}.
\end{eqnarray*}
Letting $r_0 \to 0$, we obtain
$$
1<M \le \frac{(t(\alpha-1) +1)\pi}{(\alpha -1)}\cdot\frac{1}{\cos (\frac{\pi}{2}(1-2t))}, \,\,\,\forall \alpha>1, t\in (0,1/2].
$$
Then letting $\alpha \to \infty$, we conclude
$$
1 <M \le \frac{\pi t}{\sin (\pi t)}, \,\,\,\forall t \in (0,1/2].
$$
But it is impossible if $t$ is sufficiently small, which implies $(1)$. 

$(2)$ Suppose that for every $\beta>0$, $\partial \Omega$ does not satisfy condition $(U)_{2,\beta}$. Then, for every $C,r_0>0,$ there exist a point $a\in \partial \Omega$ and a radius $r\in (0,r_0)$ such that 
$$
\left\{z\in \mathbb{C}; Cr\left(\log \frac{1}{r}\right)^{-\beta} \le |z-a|\le r  \right\} \cap \partial \Omega =\emptyset.
$$
Let $A:= \left\{z\in \mathbb{C}; Cr\left(\log \frac{1}{r}\right)^{-\beta} < |z-a|< r  \right\}$. For $r_0< \mathrm{diam}(\Omega)/2$, we have $\overline{A} \subset \Omega$. Take $z_0 \in A$ with $|z_0-a|=\sqrt{C}r\left(\log \frac{1}{r}\right)^{-\beta/2}$. Then
$$
A=\left\{ z\in \mathbb{C}; |z_0-a|e^{-m}  <|z-a|<  |z_0-a| e^m   \right\},
$$
where
$$
m=-\log \left( \sqrt{C}\left( \log \frac{1}{r} \right)^{-\beta/2}\right).
$$
Then, (\ref{3.2}) implies
$$
\rho_A(z_0) =\frac{\pi}{2|z_0-a| m}\le \frac{\pi }{2\delta_{\Omega}(z_0)}\cdot \frac{1}{\frac{\beta}{2}\log \log \frac{1}{r}+\log \frac{1}{\sqrt{C}}}.
$$
In view of (\ref{rless}), we obtain
\begin{eqnarray*}
\rho_A(z_0) &\le& \frac{\pi}{2\delta_{\Omega}(z_0)}\cdot \frac{1}{\frac{\beta}{2}\log \log \left(\frac{1}{\delta_{\Omega}(z_0)} \cdot \frac{\sqrt{C}}{2} \cdot \left(\log \frac{1}{\delta_{\Omega}(z_0)}\right)^{-\beta}\right)+\log \frac{1}{\sqrt{C}}}\\
&\le& \frac{\pi}{2\delta_{\Omega}(z_0)}\cdot \frac{1}{\frac{\beta}{2}\log \log \frac{1}{\delta_{\Omega}(z_0)}+O\left (\frac{\frac{\beta}{2}\log\log \frac{1}{\delta_{\Omega}(z_0)}}{\log \frac{1}{\delta_{\Omega}(z_0)}}\right) }.
\end{eqnarray*}
Notice that $\rho_A(z_0) \ge \rho_{\Omega}(z_0) \ge \frac{M}{\delta_{\Omega}(z_0)\log \log \frac{1}{\delta_{\Omega}(z_0)}}$. Thus, for every $\beta>0$ and $C,r_0>0,$ there exists $z_0 \in \Omega$ with $\delta_{\Omega}(z_0)<r_0$, such that
$$
0<M \le \frac{\pi}{2} \cdot \frac{\log \log \frac{1}{\delta_{\Omega}(z_0)}}{\frac{\beta}{2}\log \log \frac{1}{\delta_{\Omega}(z_0)}+O\left (\frac{\frac{\beta}{2}\log\log \frac{1}{\delta_{\Omega}(z_0)}}{\log \frac{1}{\delta_{\Omega}(z_0)}}\right) }.
$$
Letting $r_0 \to 0$, we obtain
$$
0<M \le \frac{\pi}{\beta},\,\,\,\forall \beta >0.
$$
But it is impossible if $\beta \gg 1$, which completes the proof of $(2)$.
\end{proof}

\section{Proof of Theorem 1.3}
In \cite{Sugawa2001}, Sugawa introduced the concept of the uniform $LHMD$ condition, which provides a quantitative characterization of the boundary regularity of the Dirichlet problem. This condition is also equivalent to the uniform $\Delta$-regularity. Let $\Omega$ be a domain in $\mathbb{C}$. For $a\in \partial \Omega$ and $r>0$, $w_{a,r}(z)$ denotes the harmonic measure of $\partial D(a,r)\cap \Omega$ relative to $D(a,r)\cap \Omega$. In this paper, we say that a domain $\Omega \subset \mathbb{C}$ satisfies the property $(LHMD)_{1,\gamma}$ or $(LHMD)_{2,\eta}$ if there exist constant $C,r_0>0$ such that 
$$
w_{a,r}(z)\le C\left( \frac{\log \frac{1}{|z-a|}}{\log \frac{1}{r}} \right)^{-\gamma}
$$
or
$$
w_{a,r}(z) \le C \exp{\left(-\eta \left(\frac{\log \frac{1}{|z-a|}}{\log \log \frac{1}{|z-a|}}-\frac{\log \frac{1}{r}}{\log \log \frac{1}{r}}  \right)   \right)}
$$
for every $a \in \partial \Omega, r \in (0,r_0)$, and $z\in D(x,r)\cap \Omega$, respectively.

\begin{prop}
Let $\Omega$ be a domain in $\mathbb{C}$.

$(1)$ If $\Omega$ satisfies $(LHMD)_{1,\gamma}$ for some $\gamma>0$, then there exists $\alpha>1$ such that $\Omega$ satisfies $(\Delta)_{1,\alpha}$.

$(2)$ If $\Omega$ satisfies $(LHMD)_{2,\eta}$ for some $\eta>0$, then there exists $\beta>0$ such that $\Omega$ satisfies $(\Delta)_{2,\beta}$.

\end{prop}
\begin{proof}
$(1)$The condition $(LHMD)_{1,\gamma}$ implies that there exist constant $C,r_0>0$ such that for every $a \in \partial \Omega$ and $r\in (0,r_0)$, we have
$$
w_{a,r}(z) \le C \left(\frac{\log \frac{1}{|z-a|}}{\log \frac{1}{r}}  \right)^{-\gamma}, \,\,\,\forall z \in \Omega \cap D(a,r).
$$
Let $h(r)=r^{\alpha}$ for some $\alpha>1$. Then we have
\begin{equation}
w_{a,r}(z) \le C\left( \frac{\log \frac{1}{r^{\alpha}}}{\log \frac{1}{r}}  \right)^{-\gamma}=C\alpha^{-\gamma}, \,\,\,\forall z \in \Omega \cap \partial D(a,h(r)). \label{prop 4.1.1}
\end{equation}
For $\alpha \gg 1$, let $\varepsilon=1-C\alpha^{-\gamma} \in (0,1)$. Then $(\ref{prop 4.1.1})$ implies the condition $(\Delta)_{1,\alpha}$.

$(2)$ The condition $(LHMD)_{2,\eta}$ implies that there exist some constant $C,r_0>0$ such that for every $a \in \partial \Omega$ and $r\in (0,r_0)$ we have
$$
w_{a,r}(z) \le C \exp{\left[-\eta\left( \frac{\log \frac{1}{|z-a|}}{\log \log \frac{1}{|z-a|}}-\frac{\log \frac{1}{r}}{\log \log \frac{1}{r}}    \right)  \right]}, \,\,\,\forall z \in \Omega \cap D(a,r).
$$
Let $h(r)=r\left( \log \frac{1}{r} \right)^{\beta}$ for some $\beta>0$, then we have
\begin{equation}
w_{a,r}(z) \le C\exp{\left[ \eta \left( \frac{\log r-\beta \log \log \frac{1}{r}}{\log \left(\log \frac{1}{r}+\beta \log\log \frac{1}{r}  \right)} -\frac{\log r}{\log \log \frac{1}{r}} \right)   \right]}, \,\,\,\forall z \in \Omega \cap \partial D(a,h(r)). \label{prop 4.1.2}
\end{equation}
Let $I(r):=\left( \frac{\log r-\beta \log \log \frac{1}{r}}{\log \left(\log \frac{1}{r}+\beta \log\log \frac{1}{r}  \right)} -\frac{\log r}{\log \log \frac{1}{r}} \right) $. Then
\begin{eqnarray*}
I&=&\frac{\log r}{\log \log \frac{1}{r}+O\left(\frac{\beta \log \log \frac{1}{r}}{\log \frac{1}{r}}\right)} -\frac{\beta \log\log \frac{1}{r}}{\log \log \frac{1}{r}+O\left(\frac{\beta \log \log \frac{1}{r}}{\log \frac{1}{r}}\right)} -\frac{\log r}{\log \log \frac{1}{r}}\\
&=& \log r \left[ \frac{O\left(\frac{\beta \log \log \frac{1}{r}}{\log \frac{1}{r}}\right)}{\log \log \frac{1}{r}\left( \log\log \frac{1}{r}+O\left(\frac{\beta \log \log \frac{1}{r}}{\log \frac{1}{r}}\right)  \right)}  \right]-\frac{\beta}{1+O\left(\frac{\beta}{\log \frac{1}{r}} \right)}\\
&\rightarrow&-\beta, \,\,\,(r \to 0).
\end{eqnarray*}
Notice that for $\beta \gg 1$, $Ce^{-\frac{\eta \beta}{2}}<1$. Thus, there exists a constant $r_1 \in (0,r_0)$ such that $$Ce^{\eta I(r)}<Ce^{-\frac{\eta\beta}{2}}<1$$ for all $0<r<r_1$. Let $\varepsilon=1-Ce^{-\frac{\eta\beta}{2}}$. Then $\Omega$ satisfies the condition $(\Delta)_{2,\beta}$.

\end{proof}
\begin{prop}
Let $\Omega$ be a domain in $\mathbb{C}$.

$(1)$ If $\Omega$ satisfies $(\Delta)_{1,\alpha}$ for some $\alpha>1$, then there exists  $\alpha'>\alpha$ such that $\partial \Omega$ satisfies $(U)_{1,\alpha'}$.

$(2)$ If $\Omega$ satisfies $(\Delta)_{2,\beta}$ for some $\beta>0$, then there exists $\beta'>\beta$ such that $\partial \Omega$ satisfies $(U)_{2,\beta'}$.

\end{prop}
\begin{proof}
$(1)$ The condition $(\Delta)_{1,\alpha}$ means that there exist positive constant $\varepsilon, C$ and $r_0$ such that, for every $a \in \partial \Omega$ and $r\in (0,r_0)$, we have
$$
w_{a,r}(z) \le 1-\varepsilon,\,\,\,\forall z \in \Omega \cap \partial D(a,Cr^{\alpha}).
$$
We show $(1)$ by contradiction. Suppose on the contrary that for every $\alpha'>\alpha, C'>0$ and $r_1>0$, there exist $a \in \partial \Omega$ and $r\in (0,r_1)$ such that
$$
\{ z\in \mathbb{C}; C'r^{\alpha'}\le |z-a| \le r \} \cap \partial \Omega= \emptyset.
$$
Then for $r<r_1<\frac{\mathrm{diam}(\Omega)}{2}$ we have
$$
A:=\{ z\in \mathbb{C}; C'r^{\alpha'}< |z-a| < r \} \subset \Omega.
$$
Let $r<r_1 < \min \left\{ r_0, \frac{\mathrm{diam}(\Omega)}{2}, \left( \frac{C}{C'} \right)^{\frac{1}{\alpha'-\alpha}}  \right\}$. The function
$$
\phi(z):=\frac{\log \frac{|z-a|}{C'r^{\alpha'}}}{\log \frac{r}{C'r^{\alpha'}}}\\
$$
is harmonic on $A$, such that
\begin{eqnarray*}
\phi|_{\partial D(a,C'r^{\alpha'})}&=&0 \,\, \le \,\, w_{a,r}|_{\partial D(a,C'r^{\alpha'})}\\
\phi|_{\partial D(a,r)}&=&1 \,\,=\,\, w_{a,r}|_{\partial D(a,r)}.
\end{eqnarray*}
Thus, $\phi \le w_{a,r}$ on $A$. In particular, for $z\in \partial D(x,Cr^{\alpha})\subset A$, we have
$$
\phi(z)=\frac{\log \frac{Cr^{\alpha}}{C'r^{\alpha'}}}{\log \frac{r}{C'r^{\alpha'}}} \le w_{a,r}(z) \le 1-\varepsilon.
$$
That is, for every $\alpha'> \alpha, C'>0$ and $r_1 \ll 1$, there are $a \in \partial \Omega$ and $r \in (0,r_1)$ such that
$$
\frac{(\alpha'-\alpha)\log \frac{1}{r}+\log \frac{C}{C'}}{(\alpha'-1)\log \frac{1}{r}-\log C'}\le 1-\varepsilon.
$$
Let $C'=1$ and $r_1 \to 0$. Then we obtain
$$
\frac{\alpha'-\alpha}{\alpha'-1} \le 1-\varepsilon, \,\,\,\forall \alpha'>\alpha,
$$
which is impossible.

$(2)$ The condition $(\Delta)_{2,\beta}$ means that there exist some positive constants $\varepsilon, C$ and $r_0$, such that for every $a \in \partial \Omega$ and $r\in (0,r_0)$, we have
$$
w_{a,r}(z) \le 1-\varepsilon,\,\,\,\forall z \in \Omega \cap \partial D\left(a,Cr\left(\log \frac{1}{r}\right)^{-\beta}\right).
$$
Suppose that for every $\beta'>\beta, C'>0$ and $r_1>0$, there exist $a \in \partial \Omega$ and $r\in (0,r_1)$, such that
$$
\left\{ z\in \mathbb{C}; C'r\left( \log \frac{1}{r} \right)^{-\beta'}\le |z-a| \le r \right\} \cap \partial \Omega= \emptyset.
$$
Then for $r<r_1<\frac{\mathrm{diam}(\Omega)}{2}$, we have
$$
A:=\left\{ z\in \mathbb{C}; C'r\left(\log \frac{1}{r} \right)^{-\beta'}< |z-a| < r \right\} \subset \Omega.
$$
Consider the function
$$
\phi(z):=\frac{\log \frac{|z-a|}{C'r\left(\log \frac{1}{r}\right)^{-\beta'}}}{\log \frac{r}{C'r\left( \log \frac{1}{r}\right)^{-\beta'}}}.\\
$$
As in $(1)$ we have $\phi \le w_{a,r}$ on $A$. In particular, for $z\in \partial D\left(a,Cr\left( \log\frac{1}{r} \right)^{-\beta}\right)\subset A$, we have
$$
\phi (z)=\frac{(\beta'-\beta) \log \log \frac{1}{r}+\log \frac{C}{C'}}{\beta' \log \log \frac{1}{r}-\log C'}\le 1-\varepsilon.
$$
Let $C'=1$ and $r_1 \to 0$, we obtain
$$
\frac{\beta'-\beta}{\beta'} \le 1-\varepsilon, \,\,\,\forall \beta'>\beta,
$$
which leads to a contradiction as $\beta' \to +\infty$.

\end{proof}

To complete the proof of Theorem 1.3, we need the following result from \cite{ChenDensity}.
\begin{theorem}[Chen]
Let $\Omega$ be a bounded domain in $\mathbb{C}$ with $0 \in \partial \Omega$. Let $0 \le h \le 1$ be a harmonic function on $\Omega$ such that $h=0$ n.e. on $\partial \Omega \cap D(0,r_1)$ for some $r_1<1$. For all $\kappa \in \left(0,\frac{1}{16}\right)$ and $r \le \kappa r_1$, we have
$$
\sup_{\Omega \cap D(0,r)} \le \exp{\left[ -C_{\kappa}\int_r^{\kappa r_1}\left( t\log \frac{t/\kappa}{2\mathrm{Cap}(K_t(0))} \right)^{-1} \mathrm{d}t   \right]},
$$
where $K_t(0):=\overline{D(0,t)}-\Omega$.
\end{theorem}
\begin{prop}
Let $\Omega$ be a domain in $\mathbb{C}$.

$(1)$ If $\Omega$ satisfies $(U)_{1,\alpha}$ for some $\alpha \in (1,2)$, then there exists $\gamma>0$ such that $\Omega$ satisfies $(LHMD)_{1,\gamma}$.

$(2)$ If $\Omega$ satisfies $(U)_{2,\beta}$ for some $\beta>0$, then there exists $\eta>0$ such that $\Omega$ satisfies $(LHMD)_{2,\eta}$.

\end{prop}
\begin{proof}
$(1)$ Theorem 1.3 of \cite{XiongZheng} implies that there exist some constants $C, r_0>0$ such that
$$
\mathrm{Cap}(K_r(a)) \ge Cr^{\frac{1}{2-\alpha}}
$$
for every $a\in \partial \Omega$ and $r\in (0,r_0)$, where $K_r(a):=\overline{D(a,r)}-\Omega$. 

Let $\kappa \in \left(0,\frac{1}{16}\right)$ and $r_1 \in \left( 0, \frac{r_0}{\kappa} \right)$ be fixed. For every $a \in \partial \Omega$ and $r \in (0,r_1)$, $w_{a,r}$ is a harmonic function on $\Omega_r:=\Omega\cap D(a,r)$, and $w_{a,r}(z)=0$ n.e. on $\partial \Omega_r \cap D\left(0,\frac{r}{2}\right)$. Then Theorem 4.3 implies that for $z\in \Omega_r$ with $|z-a|<\frac{\kappa r}{2}$, we have
\begin{eqnarray*}
w_{a,r}(z) &\le& \exp{\left[ -C_{\kappa}\int^{\frac{\kappa r}{2}}_{|z-a|} \frac{1}{t\log \frac{t}{2\kappa \mathrm{Cap}(K_t(a))}} \mathrm{d}t  \right]} \\
&\le& \exp{\left[ -C_{\kappa}\int^{\frac{\kappa r}{2}}_{|z-a|} \frac{1}{t\log \frac{t}{2C\kappa t^{\frac{1}{2-\alpha}}}} \mathrm{d}t  \right]} \\
&=& \exp{\left[ -C_{\kappa}\int^{\frac{\kappa r}{2}}_{|z-a|} \frac{1}{t\left[\left( \frac{1}{2-\alpha}-1 \right)\log \frac{1}{t}+\log \frac{1}{2\kappa C} \right]} \mathrm{d}t  \right]} \\
&\le& \exp{\left[ -\gamma \int^{\frac{\kappa r}{2}}_{|z-a|} \frac{1}{t\log \frac{1}{t}} \mathrm{d}t  \right]} \\
&=& \exp{\left[ -\gamma \left( \log \log \frac{1}{|z-a|}-\log\log \frac{2}{\kappa r}      \right)    \right]}\\
&\le& C_1\exp{\left[ -\gamma \left( \log \log \frac{1}{|z-a|}-\log\log \frac{1}{r}      \right) \right]}\\
&=& C_1 \left(  \frac{\log \frac{1}{|z-a|}}{\log \frac{1}{r}}  \right)^{-\gamma},
\end{eqnarray*}
where $\gamma$ and $C_1$ depend on $\kappa, r_1, \alpha, C$. When $\frac{\kappa r}{2}\le |z-a| \le r$, we have $w_{a,r}\le 1$, and 
$$
\left(  \frac{\log \frac{1}{|z-a|}}{\log \frac{1}{r}}  \right)^{-\gamma} \ge C_2,
$$
where $C_2$ depends on $\kappa, r_1,\alpha$ and $C$. Thus, there exists a constant $C_3>0$ such that
\begin{equation}
w_{a,r}(z) \le  C_3 \left(  \frac{\log \frac{1}{|z-a|}}{\log \frac{1}{r}}  \right)^{-\gamma} \label{4.4.1}
\end{equation}
for every $z \in \Omega \cap D(a,r)$. The inequality $(\ref{4.4.1})$ holds for every $a\in \partial \Omega$ and $r \in (0,r_1)$, which implies $(1)$.

$(2)$ Theorem 1.3 of \cite{XiongZheng} implies that there exist constants $C, r_0>0$ such that
$$
\mathrm{Cap}(K_r(a)) \ge Cr \left( \log \frac{1}{r} \right)^{-\beta}
$$
for every $a\in \partial \Omega$ and $r\in (0,r_0)$. 

We use Theorem 4.3 again to deduce that for $z\in \Omega_r$ with $|z-a|<\frac{\kappa r}{2}$
\begin{eqnarray*}
w_{a,r}(z) &\le& \exp{\left[ -C_{\kappa}\int^{\frac{\kappa r}{2}}_{|z-a|} \frac{1}{t\log \frac{t}{2\kappa \mathrm{Cap}(K_t(a))}} \mathrm{d}t  \right]} \\
&\le& \exp{\left[ -C_{\kappa}\int^{\frac{\kappa r}{2}}_{|z-a|} \frac{1}{t\left( \beta \log\log \frac{1}{t}+\log \frac{1}{2\kappa C} \right)} \mathrm{d}t  \right]} \\
&\le& \exp{\left[ -\eta \int^{\frac{\kappa r}{2}}_{|z-a|} \frac{1}{t\log \log \frac{1}{t}} \mathrm{d}t  \right]}, \\
\end{eqnarray*}
where $\eta$ depends on $\kappa, r_1, \alpha$ and $C$. 
Notice that
$$
\left( \frac{\log t}{\log \log \frac{1}{t}}  \right)' =\frac{1}{t\log \log \frac{1}{t}} \left( 1- \frac{1}{\log \log \frac{1}{t}} \right) \le \frac{1}{t \log \log \frac{1}{t}}.
$$
Thus, 
\begin{eqnarray*}
w_{a,r}(z) &\le& \exp{ \left[ -\eta\int_{|z-a|}^{\frac{\kappa r}{2}}  \left( \frac{\log t}{\log \log \frac{1}{t}}  \right)'  \mathrm{d}t  \right]}\\
&=& \exp{\left[- \eta\left(  \frac{\log \frac{1}{|z-a|}}{\log \log \frac{1}{|z-a|}}   - \frac{\log \frac{2}{\kappa r}}{\log \log \frac{2}{\kappa r}}    \right)  \right]}\\
&\le& C_1\exp{\left[ -\eta\left(  \frac{\log \frac{1}{|z-a|}}{\log \log \frac{1}{|z-a|}}   - \frac{\log \frac{1}{r}}{\log \log \frac{1}{r}}    \right)  \right]}.
\end{eqnarray*}
When $\frac{\kappa r}{2}\le |z-a| \le r$, $w_{a,r}\le 1$, and 
$$
\exp{\left[ -\eta\left(  \frac{\log \frac{1}{|z-a|}}{\log \log \frac{1}{|z-a|}}   - \frac{\log \frac{1}{r}}{\log \log \frac{1}{r}}    \right)  \right]} \ge C_2,
$$
where $C_2$ depends on $\kappa, r_1, \alpha, C$. Thus, there exists a constant $C_3>0$ such that
\begin{equation}
w_{a,r}(z) \le  C_3 \exp{\left[- \eta\left(  \frac{\log \frac{1}{|z-a|}}{\log \log \frac{1}{|z-a|}}   - \frac{\log \frac{1}{r}}{\log \log \frac{1}{r}}    \right)  \right]} \label{4.4.2}
\end{equation}
for every $z \in \Omega \cap D(a,r)$. The inequality $(\ref{4.4.2})$ holds for every $a\in \partial \Omega$ and $r \in (0,r_1)$, which implies $(2)$.

\end{proof}
Theorem 1.3 is obtained by combining Proposition 4.1, Proposition 4.2 and Proposition 4.4.

\section{Proof of Theorem 1.4}
Let 
$
g:(0,+\infty) \to (0,+\infty)
$
be an increasing function such that $\lim_{t \to 0} g(t)=0$. Let $E$ be a closed set in $\mathbb{C}$. Recall the concepts of Hausdorff content and Hausdorff measure: 
\begin{eqnarray*}
&\Lambda_{g}(E):= \inf \left\{ \sum_k g(\mathrm{diam} (B_k)); \{ B_k \} \text{is a countable covering of $E$ composed of closed discs} \right\},\\
&\mathcal{H}_g^{\varepsilon}(E):=\inf \left\{ \sum_k g(\mathrm{diam} (B_k)); \{ B_k \} \text{is a countable covering of $E$ such that $diam (B_k)<\varepsilon$} \right\},\\
&\mathcal{H}_g(E):=\lim_{\varepsilon \to 0} \mathcal{H}_g^{\varepsilon}(E).
\end{eqnarray*}
It is known that $\Lambda_g(E) \le \mathcal{H}_g(E)$, and $\Lambda_g(E)=0 \Leftrightarrow \mathcal{H}_g(E)=0$ (cf. \cite{Reshetnyak}). If $g(t)=t^{\alpha}, \alpha>0$, then we write $\Lambda^{\alpha}(E):=\Lambda_{g}(E)$, $\mathcal{H}^{\alpha}(E):= \mathcal{H}_g(E)$. The Hausdorff dimension of $E$ is $\dim_H(E):= \inf \{ \alpha>0; \mathcal{H}^{\alpha}(E)=0  \}.$

Now we focus on the proof of Theorem 1.4, which is inspired by \cite{Sugawa1998}. For a closed disc $B=\overline{B}(a,r)$ we write $\mathrm{rad}(B):=r$ and $\mathrm{cent}(B)=a$. 
\begin{lemma}
Let $E \subset \mathbb{C}$ be  $h$-uniformly perfect. Then there exists $r_0>0$ such that for every $a \in E, r \in (0,r_0)$, and $ \tilde{c}\in (0,\frac{1}{2})$,  there are closed discs $B_1$ and $B_2$, such that

$(1)$ $B_i \subset B:=\overline{B}(a,r), i=1,2$;

$(2)$ $B_1 \cap B_2=\emptyset$;

$(3)$ $ \mathrm{cent}(B_i) \in E, i=1,2$;

$(4)$ $\mathrm{rad} (B_i)=\tilde{c} \cdot h\left(\frac{r}{2}\right)=:\tilde{h}(r),i=1,2$.
\end{lemma}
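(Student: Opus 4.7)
The plan is to construct the two discs by picking $B_1$ centered at the given base point $a$ and finding the center of $B_2$ via a single application of the $h$-uniform perfectness condition at scale $r/2$ around $a$. Since $h(r)=o(r)$, I would first choose $r_0>0$ small enough that $h(s)<s/2$ holds for all $s\in(0,r_0/2)$, and also small enough that $r/2$ lies within the range where the $h$-uniformly perfect condition applies. With this choice, for any $a\in E$ and $r\in(0,r_0)$, the condition gives a point
\[
b\in\{z\in\mathbb{C};\ h(r/2)\le|z-a|\le r/2\}\cap E.
\]
I then set $B_1:=\overline{B}(a,\tilde h(r))$ and $B_2:=\overline{B}(b,\tilde h(r))$, where $\tilde h(r)=\tilde c\cdot h(r/2)$.

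Next I would verify the four required properties. Property $(3)$ is immediate since $a\in E$ by hypothesis and $b\in E$ by construction, and property $(4)$ is built into the definition of the radii. For $(1)$, the inclusion $B_1\subset B$ reduces to $\tilde h(r)\le r$, which follows from $\tilde c<1/2$ together with $h(r/2)<r/2<r$; the inclusion $B_2\subset B$ reduces to $|b-a|+\tilde h(r)\le r$, which follows from $|b-a|\le r/2$ and $\tilde h(r)<\tfrac12 h(r/2)<r/4$. For $(2)$, disjointness amounts to $|b-a|>2\tilde h(r)=2\tilde c\cdot h(r/2)$; since $\tilde c<1/2$ and $|b-a|\ge h(r/2)$, we get $2\tilde c\cdot h(r/2)<h(r/2)\le|b-a|$, as needed.

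There is really no serious obstacle here: the proof is a bookkeeping exercise once one realises that $(U)$-type conditions produce a single companion point in $E$ at a controlled distance, and that the factor $\tilde c<1/2$ in the radius is precisely what creates the necessary slack for disjointness. The only place one has to be a little careful is in choosing $r_0$ small enough so that both $h(r/2)<r/2$ (needed so that the annular shell around $a$ is nontrivial and so that $B_2$ fits inside $B$) and so that the base $h$-uniformly perfect hypothesis applies at scale $r/2$; both reductions are immediate from $h(r)=o(r)$ and from the existence of the parameter $r_0$ in the definition of $h$-uniform perfectness.
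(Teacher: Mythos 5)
Your proof is correct and takes essentially the same route as the paper: fix $B_1$ centered at $a$, apply the $h$-uniform perfectness at scale $r/2$ to produce a companion point $b$ in the annulus $\{h(r/2)\le|z-a|\le r/2\}$, and let $B_2$ be centered at $b$ with the same radius $\tilde c\,h(r/2)$. The paper states the containments and disjointness more tersely, but the verifications you supply (using $\tilde c<1/2$ for disjointness and $h(r/2)<r/2$ for containment) are exactly the ones implicitly relied upon there.
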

\begin{proof}
Let $B_1=\overline{B}\left(a,\tilde{c}h\left(\frac{r}{2}\right)\right) \subset B$. Since $E$ is $h$-uniformly perfect, there exists a constant $r_0>0$ such that
$$
\{z \in \mathbb{C}; h(r)\le |z-a|\le r\} \cap E \ne \emptyset
$$
for every $a\in E$ and $r\in (0,r_0)$. 
Take a point
$$
b\in \left\{z\in \mathbb{C}; h\left(\frac{r}{2}\right) \le |z-a|\le \frac{r}{2}   \right\}\cap E,
$$
and let $B_2:= \overline{B}\left(b,\tilde{c}h\left(\frac{r}{2}\right)\right)$.  Then $B_1\cap B_2=\emptyset$, since $\tilde{c}<\frac{1}{2}$. It is also easy to see that $B_2 \subset B$. 
\end{proof}
Applying Lemma 5.1 inductively, we select a sequence $\{B_{i_1,\cdots,i_k}\}_{k\in \mathbb{N},i_j \in I}, I=\{1,2\}$ such that:
\begin{eqnarray*}
&(1)& B_{i_1,\cdots,i_k} \subset B_{i_1,\cdots,i_{k-1}};\\
&(2)& B_{i_1,\cdots,i_{k-1},1}\cap B_{i_1,\cdots,i_{k-1},2}=\emptyset;\\
&(3)& \mathrm{cent}(B_{i_1,\cdots,i_k})\in E;\\
&(4)& \mathrm{rad}(B_{i_1,\cdots,i_k}) =\tilde{h}^{\circ (k)}(r).\\
\end{eqnarray*}
Now we construct a Cantor-type set $\mathcal{C}:= \bigcap_{k=0}^{\infty}\bigcup_{(i_1,\cdots i_k)\in I^k} B_{i_1,\cdots,i_k}$. 

Let $f$ be a map defined by
$$
f: I^{\mathbb{N}} \to \mathcal{C}, \{i_k\}_{k \in \mathbb{N}} \to \bigcap_{k=1}^{\infty}B_{i_1,\cdots,i_k}.
$$
We equip $I$ and $I^{\mathbb{N}}$ the discrete and product topologies, respectively. Obviously, $\mathcal{C} \subset \mathbb{C}$ is Hausdorff, and Tychonoff Theorem implies that $I^{\mathbb{N}}$ is compact. It is also easy to verify that $f$ is a continuous bijection, so $f$ is a homeomorphism.  
We know that $I^{\mathbb{N}}$ carries a standard Bernoulli measure $\nu$ such that $\nu([i_1,\cdots,i_k])=2^{-k}$, for each $[i_1,\cdots,i_k]=\{ (j_l)_{l\in \mathbb{N}}; j_1=i_1, \cdots, j_k=i_k  \}$. Then $\mu:=f_{*}\nu$ is a probability measure on $\mathcal{C}$ such that

$(1)$ $supp \,\,\mu =\mathcal{C}$;

$(2)$ $\mu(B_{i_1,\cdots,i_k})=2^{-k}, \,\,\,\forall k \in \mathbb{N},(i_1,\cdots,i_k)\in I^k$.

\begin{proof}[Proof of Theorem 1.4]
$(1)$  $(\Rightarrow)$: Write $h(r)=C_0h_{1,\alpha}(r)=C_0r^{\alpha}$. Then there exists a constant $r_0>0$ such that for every $a\in E$ and $r\in (0,r_0)$, we can construct a Cantor-type set $\mathcal{C} \subset E \cap \overline{B}(a,r)$ as above. Let $A=\overline{B}(x,\rho)$.

If $\rho <r $, take $k \in \mathbb{N}$ such that $\tilde{h}^{\circ (k+1)}(r)<\rho \le \tilde{h}^{\circ k}(r)$. Let $$J:= \{ (i_1,\cdots,i_k) \in I^k; B_{i_1,\cdots,i_k}\cap A \ne \emptyset  \}.$$ For $(i_1,\cdots,i_k) \in J$, we have
$$
B_{i_1,\cdots,i_k} \subset \overline{B}(x,\rho+2\tilde{h}^{\circ k}(r)).
$$
Notice that these discs do not inersect each other. Therefore, we observe that
$$
\# J \cdot \pi \cdot (\tilde{h}^{\circ k}(r))^2 \le \pi(\rho +2\tilde{h}^{\circ k})^2.
$$
So
$$
\# J \le \left( \frac{\rho}{\tilde{h}^{\circ k}(r)}+2  \right)^2 \le 9.
$$
Thus, we have
\begin{equation}\label{muAless}
\mu (A) \le \mu \left(\bigcup_J B_{i_1,\cdots,i_k}\right)\le \sum_{J} \mu(B_{i_1,\cdots,i_k})\le 9\cdot 2^{-k}.
\end{equation}
Since $h(r)=h_{1,\alpha}(r)=C_0r^{\alpha}$ for some $C_0>0,\alpha>1$, we have $\tilde{h}(r)=\tilde{c}\cdot C_0\cdot 2^{-\alpha}r^{\alpha}=:C_1 r^{\alpha}$. Simple calculations indicate that
$$
\tilde{h}^{\circ k}(r) = C_2^{-1}(C_2 r)^{\alpha^k},
$$
where $C_2=C_1^{\frac{1}{\alpha-1}}$. Thus, for
$$
\tilde{h}^{\circ (k+1)}(r) <\rho \le \tilde{h}^{\circ k}(r),
$$
we have
$$
(C_2r)^{\alpha^{k+1}}<C_2\rho \le (C_2 r)^{\alpha^k}.
$$
If $r_0<\frac{1}{C_2}$, then
$$
k+1>\frac{1}{\log \alpha}\left( \log \log \frac{1}{C_2\rho}-\log\log \frac{1}{C_2r} \right) \ge k.
$$
We infer from (\ref{muAless}) that
\begin{eqnarray}
\notag \mu (A)&\le& 9\cdot 2^{-k}\\
\notag&<&9\cdot 2^{1-\frac{1}{\log \alpha}\left( \log \log \frac{1}{C_2\rho}-\log\log \frac{1}{C_2r} \right)}\\
\notag&=&18 \exp\left(\frac{\log 2}{\log \alpha}\log \log \frac{1}{C_2r} \right)\exp\left( -\frac{\log 2}{\log \alpha}\log \log \frac{1}{C_2\rho} \right)\\
&=& 18\left( \log \frac{1}{C_2r} \right)^{\frac{\log 2}{\log \alpha}}\left( \log \frac{1}{C_2\rho} \right)^{-\frac{\log 2}{\log \alpha}}. \label{muAless2}
\end{eqnarray}
Write $\gamma :=\frac{\log 2}{\log \alpha}.$ Let
$$
g_{1,\gamma}(t):= \left( \log \frac{2}{C_2t} \right)^{-\gamma}, t \in (0,2r_0).
$$
Here $r_0>0$ is sufficiently small such that $g_{1,\gamma}$ is increasing. We can extend the definition of $g_{1,\gamma}$ to $(0,+\infty)$ so that $g_{1,\gamma}$ still increases. Then $(\ref{muAless2})$ can be written as
\begin{equation}\label{muAless3}
\mu(A) \le 18 g_{1,\gamma}(2r)^{-1}\cdot g_{1,\gamma}(2\rho).
\end{equation}

When $\rho \ge r$, we still have
$$
\mu (A) \le 1 \le 18 g_{1,\gamma}(2r)^{-1}g_{1,\gamma}(2r) \le 18 g_{1,\gamma}(2r)^{-1}g_{1,\gamma}(2\rho),
$$
due to the monotonicity of $g_{1,\gamma}$.

Now for any countable cover $\{A_j\}$ of $\mathcal{C}$ by closed discs $A_j$ with radii $\rho_j$, we have
$$
1=\mu (\mathcal{C}) \le \sum_j \mu (A_j) \le \sum_j 18 g_{1,\gamma}(2r)^{-1}g_{1,\gamma}(2\rho_j).
$$
Thus,
$$
\sum_j g_{1,\gamma}(2\rho_j) \ge \frac{1}{18} g_{1,\gamma}(2r).
$$
Since $\{A_j\}$ is arbitrary, we obtain 
$$
\Lambda_{g_{1,\gamma}}(\mathcal{C}) \ge \frac{1}{18}g_{1,\gamma}(2r).
$$
Notice that $\mathcal{C} \subset \overline{B}(a,r)$, so
$$
\Lambda_{g_{1,\gamma}}(E \cap \overline{B}(a,r)) \ge \frac{1}{18}g_{1,\gamma}(2r).
$$
Since $a\in E, r\in (0,r_0)$ are arbitrary, we conclude the necessity of $(1)$.

$(\Leftarrow)$: Now, for every $a \in E$ and $ r \in (0,r_0)$, we have
\begin{equation}\label{Lambdage}
\Lambda_{g_{1,\gamma}}(E \cap \overline{B}(a,r)) \ge A\cdot \left( \log \frac{1}{2Cr} \right)^{-\gamma}.
\end{equation}
Here $\{\overline{B}(a,r+\varepsilon)\}$ is a countable cover of $\overline{B}(a,r)$ for each $\varepsilon>0$, so we have
$$
\Lambda_{g_{1,\gamma}}(\overline{B}(a,r)) \le \left( \log \frac{1}{2Cr}  \right)^{-\gamma}.
$$
Let $h(t)=t^{\alpha}$, where $\alpha>1$ is undetermined. Then
\begin{equation}\label{Lambdale}
\Lambda_{g_{1,\gamma}}(\overline{B}(a,h(r))) \le \left( \log \frac{1}{2Cr^{\alpha}}  \right)^{-\gamma}.
\end{equation}

We claim that there exist $\alpha>1$ and $0<r_1 \ll 1$ such that
$$
\left( \log \frac{1}{2Cr^{\alpha}}  \right)^{-\gamma}< A\cdot \left( \log \frac{1}{2Cr} \right)^{-\gamma},\,\,\,\forall \, r\in (0,r_1).
$$ 
To establish this claim, it suffices to show that
\begin{equation}\label{Age}
A> \left( \frac{\log \frac{1}{r}+\log \frac{1}{2C}}{ \alpha \log \frac{1}{r}+\log \frac{1}{2C}}  \right)^{\gamma}, \,\,\,\forall \, r \in (0,r_1).
\end{equation}
Notice that when $r \to 0$, the right side of $(\ref{Age})$ converges to $\left(\frac{1}{\alpha}\right)^{\gamma}$. So $(\ref{Age})$ holds for $\alpha \gg 1$ and $r_1 \ll 1$.

By the claim, inequalities (\ref{Lambdage}) and (\ref{Lambdale}), for every $a \in E$ and $r \in (0,r_1)$, we have
$$
\Lambda_{g_{1,\gamma}}(\overline{B}(a,h(r))) < \Lambda_{g_{1,\gamma}}(E \cap \overline{B}(a,r)).
$$
In particular, $E \cap \overline{B}(a,r) -\overline{B}(a,h(r)) \ne \emptyset$. We conclude that $E$ satisfies condition $(U)_{1,\alpha}$. 

$(2)$ $(\Rightarrow)$ The proof is essentially the same as in $(1)$, except that the form of (\ref{muAless3}) may change due to the change in the choice of the function $h$. Now, we have $\tilde{h}(r)=C_0r \left(\log \frac{2}{r}\right)^{-\beta}$ for some constant $C_0>0$. Write $s_0=r,s_{k+1}=\tilde{h}(s_k)$. In view of the derivation process of (3.25) in \cite{XiongZheng}, there exists a constant $C_1>0$ such that
\begin{equation}\label{ktimes}
\frac{1}{C_1}\beta k \le \frac{\log \frac{1}{s_k}}{\log \log \frac{2}{s_k}} -\frac{\log \frac{1}{r}}{\log \log \frac{2}{r}} \le C_1\beta k.
\end{equation}
When $\rho \in (s_{k+1},s_k]$, we see from the monotonicity of $\frac{\log \frac{1}{t}}{\log \log \frac{2}{t}}$ that
$$
C_1\beta(k+1) \ge \frac{\log \frac{1}{s_{k+1}}}{\log \log \frac{2}{s_{k+1}}} -\frac{\log \frac{1}{r}}{\log \log \frac{2}{r}} > \frac{\log \frac{1}{\rho}}{\log \log \frac{2}{\rho}} -\frac{\log \frac{1}{r}}{\log \log \frac{2}{r}}.
$$
Then $$k > \frac{1}{C_1 \beta} \left(\frac{\log \frac{1}{\rho}}{\log \log \frac{2}{\rho}} -\frac{\log \frac{1}{r}}{\log \log \frac{2}{r}} \right) -1.$$ 
In view of (\ref{muAless}), we obtain
\begin{equation}\label{muAless4}
\mu(A) \le 9 \cdot 2^{-k} \le 18 g_{2,\eta}(2r)^{-1}g_{2,\eta}(2\rho),
\end{equation}
where $\eta =\frac{\log 2}{C_1\beta}$ and $g_{2,\eta}$ is a function increasing on $(0,+\infty)$, such that $$g_{2,\eta}(t)=\exp\left(-\eta \frac{\log \frac{2}{t}}{\log \log \frac{4}{t}}\right)$$
on $(0,2r_0)$.
Replacing   (\ref{muAless3}) in $(1)$ with (\ref{muAless4}) to obtain the necessity part of $(2)$.

$(\Leftarrow)$ Similar to $(1)$, we have
$$
\exp \left( -\eta \frac{\log \frac{2}{r}}{\log \log \frac{4}{r}} \right) \ge \Lambda_{g_{2,\eta}} (\overline{B}(a,r))\ge \Lambda_{g_{2,\eta}} (E \cap \overline{B}(a,r)) \ge A \cdot \exp \left( -\eta \frac{\log \frac{2}{r}}{\log \log \frac{4}{r}} \right).
$$
Let $h(r)=r\left( \log \frac{1}{r} \right)^{-\beta}$, then
$$
\Lambda_{g_{2,\eta} }(\overline{B}(a,h(r))) \le \exp \left( -\eta \cdot \frac{\log \frac{2}{r\left(\log \frac{1}{r}\right)^{-\beta}}}{\log \log \frac{4}{r\left(\log \frac{1}{r} \right)^{-\beta}}} \right).
$$

We claim that there exist constants $\beta>0$ and $0<r_2 \ll 1$, such that
$$
\exp \left( -\eta \cdot \frac{\log \frac{2}{r\left(\log \frac{1}{r}\right)^{-\beta}}}{\log \log \frac{4}{r\left(\log \frac{1}{r} \right)^{-\beta}}} \right) <A \cdot \exp \left( -\eta \frac{\log \frac{2}{r}}{\log \log \frac{4}{r}} \right) , \,\,\,\forall r \in (0,r_2).
$$
In fact, it suffices to show that
$$
\frac{\log A}{\eta} > \frac{\log \frac{2}{r}}{\log \left( \log \frac{4}{r}  \right)}-\frac{\log \frac{2}{r}+\beta\log \log \frac{1}{r}}{\log \left(\log \frac{4}{r} +\beta \log \log\frac{1}{r}\right)}=:I,\,\,\,\forall \,r \in (0,r_2).
$$
Here
\begin{eqnarray*}
I&=& \frac{\log \frac{2}{r}}{\log \left( \log \frac{4}{r}  \right)}-\frac{\log \frac{2}{r}+\beta\log \log \frac{1}{r}}{\log\log \frac{4}{r} +\log \left( 1+\frac{\beta \log \log\frac{1}{r}}{\log \frac{4}{r}}\right)}\\
&=& \frac{\log \frac{2}{r}}{\log \left( \log \frac{4}{r}  \right)}-\frac{\log \frac{2}{r}}{\log \left( \log \frac{4}{r} \right) +O\left(\frac{\beta \log \log\frac{1}{r}}{\log \frac{4}{r}}  \right)}-\beta \frac{\log \log \frac{1}{r}}{\log \left(\log \frac{4}{r} +\beta \log \log\frac{1}{r}\right)}\\
&=&\frac{\log \frac{2}{r}}{\log \left( \log \frac{4}{r}  \right)} \left( \frac{O\left(\frac{\beta \log \log\frac{1}{r}}{\log \frac{2}{r}} \right)}{\log \log \frac{2}{r}+O\left(\frac{\beta \log \log\frac{1}{r}}{\log \frac{4}{r}} \right)  } \right)  -\beta \frac{\log \log \frac{1}{r}}{\log \left(\log \frac{4}{r} +\beta \log \log\frac{1}{r}\right)}  \\
&\to & -\beta, \,\,\,(r \to 0).
\end{eqnarray*}
So the claim is verified if we choose $\beta \gg 1$ and $r_2 \ll 1$. The proof is concluded as in $(1)$.
\end{proof}

\section*{Appendix}
\begin{example}
There exists a compact set $E \subset \mathbb{C}$ such that $\dim_H(E)=0$ and $E$ satisfies condition $(U)_{1,\alpha}$ (or condition $(U)_{2,\beta}$).
\end{example}
Consider a Cantor-type set $\mathcal{C}$: choose a sequence $\{ l_j\}_{j=0}^{\infty}$, where $l_j>0$ and $l_{j+1}<\frac{l_j}{2}$. Set $\mathcal{C}_0=[0,l_0]$ and define $\mathcal{C}_j$ to be a union of $2^j$ closed intervals inductively, such that $\mathcal{C_j}$ is obtained by removing from the middle of each inteval in $\mathcal{C}_{j-1}$ an open subinteval of length $l_{j-1}-2l_j$. 
For example, $\mathcal C_1=[0,l_1]\cup[l_0-l_1,l_0]$, $\mathcal C_2=[0,l_2]\cup[l_1-l_2,l_1]\cup[l_0-l_1,l_0-l_1+l_2]\cup[l_0-l_2,l_0]$, etc. Write
\[
\mathcal C_j=\bigcup^{2^j}_{k=1}I_{j,k},
\]
where each $I_{j,k}$ is a closed inteval of length $l_j$, lying on the left of $I_{j,k+1}$. We set
\[
\mathcal C:=\bigcap^\infty_{j=0} \mathcal C_j.
\]
For every $\gamma>0$ and $ j \in \mathbb{N}$, we have
$$
\Lambda^{\gamma}(\mathcal{C}) \le \Lambda^{\gamma}(\mathcal{C}_j) \le \sum_{k=1}^{2^j}l_j^{\gamma}=2^j\cdot l_j^{\gamma}.
$$

Let $l_j=l_0^{\alpha^j}$, then 
$$0 \le \Lambda^{\gamma}(\mathcal{C}) \le \lim_{j \to \infty} \Lambda^{\gamma}(\mathcal{C}_j) =0$$ 
for each $\gamma>0$, which implies $\dim_H(\mathcal{C})=0.$ Also, this Cantor-type set $\mathcal{C}$ satisfies condition $(U)_{1,\alpha}$, see \cite{XiongZheng}.

Similarly, if we take $l_j=l_{j-1}\left( \log \frac{1}{l_j} \right)^{-\beta}<\frac{l_j}{2}$, then the corresponding $\mathcal{C}$ satisfies condition $(U)_{2,\beta}$ and $dim_{H}(\mathcal{C})=0$.

{\bf Acknowledgements.} We are grateful to Prof. Bo-Yong Chen and Dr. Yuan-pu Xiong for many inspiring discussions and critical suggestions.

\end{document}